\documentclass[10pt,leqno]{amsart}

\usepackage{graphicx}
\usepackage{indentfirst,csquotes}
\usepackage{amssymb,amsthm,amsmath}
\usepackage{xcolor,paralist,hyperref,titlesec,fancyhdr,etoolbox}
\usepackage{lipsum}

\newtheorem{theorem}{Theorem}[]

\newtheorem{example}[theorem]{Example}
\newtheorem{lemma}[theorem]{Lemma}
\newtheorem{proposition}[theorem]{Proposition}
\newtheorem{corollary}[theorem]{Corollary}
\newtheorem{remark}[theorem]{Remark}

\titleformat{\section}
{\normalfont\Large\bfseries\centering}
{\thesection.}
{0.5em}
{}

\titlespacing*{\section}{0pt}{2ex}{1ex}

\hypersetup{
    colorlinks=true,
    linkcolor=black,
    filecolor=black,
    urlcolor=black
}

\begin{document}


\title{On the Cyclicity of Algebraic Lattices}


\author{Maria Fernanda Zordan Bonini}
\address{São Paulo State University (UNESP), 2265 Cristovao Colombo Street, 15054‐000, São Jose do Rio Preto - SP, Brazil}
\email{maria.bonini@unesp.br}

\author{Robson Ricardo de Araujo}
\address{Federal Institute of Sao Paulo (IFSP), 239, Pastor Jose Dutra de Moraes Street, 15808-305, Catanduva - SP, Brazil}
\email{robson.ricardo@ifsp.edu.br}

\author{Antonio Aparecido de Andrade}
\address{São Paulo State University (UNESP), 2265 Cristovao Colombo Street, 15054‐000, São Jose do Rio Preto - SP, Brazil}
\email{antonio.andrade@unesp.br}

\author{Jéfferson Luiz Rocha Bastos}
\address{São Paulo State University (UNESP), 2265 Cristovao Colombo Street, 15054‐000, São Jose do Rio Preto - SP, Brazil}
\email{jefferson.bastos@unesp.br}

%


\subjclass{Primary: 11H06, 11H31, 11R04, 94B75}
\keywords{Lattices; Cyclic Lattices; Quasi-Cyclic Lattices; Algebraic Lattices; Ideal Lattices}

\begin{abstract}
This work presents theoretical advances in the study of cyclic and quasi-cyclic lattices. First, we provide elementary facts regarding cyclic and quasi-cyclic lattices, and discuss about the cyclicity of some notable lattices. The main contributions of the paper is in the algebraic setting: we investigate cyclic lattices arising from $\mathbb{Z}$-modules in Galois number fields via the Minkowski embedding. We establish necessary and sufficient conditions for an algebraic lattice to be cyclic over both cyclic and general Galois number fields, expressed in terms of naturally associated groups. Moreover, we derive a necessary and sufficient condition for ideal lattices to be cyclic, depending on the factorization of the ideal in the underlying number field.
\end{abstract}

\maketitle

\bigskip

\section{Introduction}

A lattice \( \Lambda \subset \mathbb{R}^n \) is a $\mathbb{Z}$-module generated by $k \leq n$ linearly independent vetors in $\mathbb{R}^n$. In other words, a lattice $\Lambda \subset \mathbb{R}^n$ of dimension $k$ can be written as $\Lambda = B\mathbb{Z}^k$, where $1 \leq k \leq n$ and $B$ is an $n \times k$ matrix of rank $k$ with entries in $\mathbb{R}$. A lattice $\Lambda$ has full rank if $k=n$. The dual lattice of $\Lambda \subset \mathbb{R}^n$ is defined by \(\Lambda^* = \{ \vec{y} \in \mathbb{R}^n \mid \langle \vec{x}, \vec{y} \rangle \in \mathbb{Z} \text{ for all } \vec{x} \in \Lambda \} \). 

Since the end of the last century, lattices have been widely used in different areas of telecommunications and computing, especially in sphere packing and covering problems, in signal transmission in certain types of communication channels, and in the development of post-quantum cryptography techniques \cite{Bernstein2009}. In the context of sphere packing,
full-diversity rotated versions of the lattices $D_n$ and their packing
densities were studied in \cite{AraujoJorge2020}. A relevant approach to constructing lattices involves the use of tools from Algebraic Number Theory. The lattices obtained in this way, called algebraic lattices, are defined as the images of $\mathbb{Z}$-modules in number fields under a homomorphism into a Euclidean space. In particular, algebraic lattices constructed from Abelian extensions have been investigated in connection with error-correcting codes \cite{InterlandoEtAl2021}.

A lattice $\Lambda \subset \mathbb{R}^n$ (of any rank) is called \emph{cyclic} if it is closed under the shift operator $\rho(c_1, \dots, c_n) = (c_n, c_1, \dots, c_{n-1})$, i.e., $\rho(\Lambda) = \Lambda$. The condition that a lattice $\Lambda$ of rank $n$ is cyclic is equivalent to requiring that its automorphism group $\text{Aut}(\Lambda)$ contains the permutation matrix corresponding to the standard $n$-cycle $(1 \, \cdots \, n)$. This condition is nontrivial, and lattices with large automorphism groups are of particular interest in lattice theory. A close relationship exists with cyclic codes via Construction A, which are widely used in digital communications. Cyclic lattices were proposed for application in post-quantum cryptography by Micciancio \cite{Micciancio2007} and later explored in \cite{Lyubashevsky2006,Peikert2006}; such lattices also appear in the NTRU cryptosystem and other post‑quantum schemes \cite{Bernstein2009}. Integer cyclic lattices (those inside $\mathbb{Z}^n$) are especially important in cryptography; for instance, on a broad class of well‑rounded cyclic lattices the Shortest Vector Problem (SVP) equals the Shortest Independent Vectors Problem (SIVP) \cite{lenny2}. 

If the lattice is cyclic and there exists a vector $\vec{a} \in \Lambda$ such that \( \Lambda = \langle \vec{a}, \rho(\vec{a}), \dots, \rho^{n-1}(\vec{a}) \rangle_{\mathbb{Z}}, \)
where $\rho$ denotes the shift rotation linear operator, then $\Lambda$ is said to be  \textit{strongly cyclic} (or  \textit{simple cyclic}). In other words, a strongly cyclic lattice has a cyclic basis. Furthermore, a lattice \( \Lambda \subset \mathbb{R}^n \) is called \textit{quasi-cyclic} if there exists an integer \( k \in \mathbb{N} \) (where \( 2 \leq k \leq n-1 \)) such that \( \rho^k(\Lambda) = \Lambda \), with \( \rho \) denoting the shift rotation operator. In this case, we say \( \Lambda \) is  \textit{\(k\)-cyclic}. This definition generalizes the notion of cyclic lattices, which correspond to the case \( k = 1 \). The trivial case \( k = n \) reduces to the identity transformation; thus, only values \( 1 \leq k \leq n-1 \) are nondegenerate. The \textit{cyclicity factor} $k \in \mathbb{N}$ is defined to be the smallest number $k$ of rotations required to satisfy $\rho^k(\vec{x}) \in \Lambda$ for every $\vec{x} \in \Lambda$.

This work contributes to and refines the theory of cyclic and quasi‑cyclic lattices. We first strengthen the foundations by providing equivalent characterizations that yield a finite algorithm to test cyclicity and quasi‑cyclicity using only basis vectors. 

\begin{theorem}
Let $\Lambda \subset \mathbb{R}^n$ be a lattice with basis $B=\{\vec{v}_1, \ldots, \vec{v}_m\}$, where $m \leq n$. Then $\Lambda$ is cyclic if and only if $\rho(\vec{v}_i) \in \Lambda$ for every $\vec{v}_i \in B$.     
\end{theorem}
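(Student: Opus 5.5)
The plan is to use only the linearity of $\rho$ and the fact that $\rho^n$ is the identity. Recall that cyclicity is defined by the equality $\rho(\Lambda)=\Lambda$, so for the reverse implication I must prove two inclusions.

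The forward direction is immediate. If $\Lambda$ is cyclic, then $\rho(\Lambda)=\Lambda$. In particular $\rho(\vec{v}_i)\in\rho(\Lambda)=\Lambda$ for every $\vec{v}_i\in B$, since each basis vector lies in $\Lambda$.

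For the converse, assume $\rho(\vec{v}_i)\in\Lambda$ for $i=1,\ldots,m$. I first show $\rho(\Lambda)\subseteq\Lambda$. Any $\vec{x}\in\Lambda$ can be written as $\vec{x}=\sum_{i=1}^m a_i\vec{v}_i$ with $a_i\in\mathbb{Z}$. Since $\rho$ is a linear operator on $\mathbb{R}^n$,
\[
\rho(\vec{x})=\sum_{i=1}^m a_i\,\rho(\vec{v}_i).
\]
This is an integer combination of elements of $\Lambda$, and $\Lambda$ is a $\mathbb{Z}$-module, so $\rho(\vec{x})\in\Lambda$.

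The only point needing care is the reverse inclusion $\Lambda\subseteq\rho(\Lambda)$. Inclusion alone does not give equality in general: an injective map can send a lattice to a proper sublattice, for instance $2\cdot\mathrm{Id}$. Here I use that $\rho$ is the cyclic shift, so $\rho^n=\mathrm{Id}$. Iterating the inclusion just proved gives $\rho^{k}(\Lambda)\subseteq\Lambda$ for every $k\ge 1$, in particular $\rho^{n-1}(\Lambda)\subseteq\Lambda$. Applying $\rho$ to both sides then yields
\[
\Lambda=\rho^{n}(\Lambda)=\rho\bigl(\rho^{n-1}(\Lambda)\bigr)\subseteq\rho(\Lambda).
\]
Combining the two inclusions gives $\rho(\Lambda)=\Lambda$, so $\Lambda$ is cyclic.

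This argument does not use that $\Lambda$ has full rank, so it holds for any $m\le n$. The same reasoning, with $\rho^k$ in place of $\rho$ and $(\rho^k)^n=\mathrm{Id}$, would also give the analogous criterion for $k$-cyclicity.
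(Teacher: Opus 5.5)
Your proof is correct and follows the same route as the paper, which derives Theorem 1.1 from Lemma 2.1 by expanding $\vec{x}\in\Lambda$ in the basis $B$ and using the linearity of $\rho$ to conclude $\rho(\vec{x})\in\Lambda$. Your extra step $\Lambda=\rho^{n}(\Lambda)=\rho(\rho^{n-1}(\Lambda))\subseteq\rho(\Lambda)$ gives the reverse inclusion, which the definition's equality $\rho(\Lambda)=\Lambda$ requires; the paper's proof only shows closure and leaves this point implicit.
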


This theorem allows us to verify if famous lattices are cyclic by only testing one of their bases. In Section 2, we prove Theorem 1.1 and prove that the cyclicity factor always divides the lattice dimension.

The main contributions of this work consist of studying the cyclicity of algebraic lattices over Galois number fields (Section 3). 
First, we establish a necessary and sufficient condition for an algebraic lattice to be cyclic over cyclic number fields. In this direction, we prove the following theorem:

\begin{theorem}
Let $\mathbb{K}/\mathbb{Q}$ be a cyclic extension of degree $n$ with Galois group $G=\textnormal{Gal}(\mathbb{K}/\mathbb{Q})$. Consider $M$ a $\mathbb{Z}$-submodule of $\mathbb{K}$ with rank $n$. Then $\Lambda_M = \sigma(M)$ is cyclic if and only if $M$ is $G$-stable.
\end{theorem}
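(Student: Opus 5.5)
The plan is to make the action of a generator of $G$ on $\mathbb{K}$ correspond exactly to a power of the shift operator $\rho$ on $\mathbb{R}^n$, and then to apply Theorem 1.1 to a basis of $\Lambda_M$. Fix a generator $\tau$ of $G=\textnormal{Gal}(\mathbb{K}/\mathbb{Q})$, so $G=\{\mathrm{id},\tau,\dots,\tau^{n-1}\}$. I will assume, as the statement implicitly requires, that the Minkowski embedding $\sigma$ lists the embeddings in the order given by the powers of this generator, that is,
\[
\sigma(x)=\bigl(x,\tau(x),\tau^2(x),\dots,\tau^{n-1}(x)\bigr).
\]
This is the natural choice in the totally real case. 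If $\mathbb{K}$ is totally complex, I would first check how the paper arranges the coordinates. The argument below only needs a coordinate ordering in which precomposition with $\tau$ permutes the coordinates by a fixed power of $\rho$.

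The first step is the identity $\sigma(\tau(x))=\rho^{-1}(\sigma(x))$ for all $x\in\mathbb{K}$. This holds because
\[
\sigma(\tau(x))=\bigl(\tau(x),\tau^2(x),\dots,\tau^{n-1}(x),\tau^n(x)\bigr)
\]
and $\tau^n=\mathrm{id}$. Applying this to all of $M$ gives $\sigma(\tau(M))=\rho^{-1}(\Lambda_M)$. Since $\sigma$ is injective and $\mathbb{Q}$-linear, this yields the equivalence
\[
\tau(M)=M \iff \rho^{-1}(\Lambda_M)=\Lambda_M \iff \rho(\Lambda_M)=\Lambda_M .
\]

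The second step is to reduce both sides to inclusions. Suppose $\tau(M)\subseteq M$. Then $\tau(M)$ is a rank $n$ submodule of $M$, and iterating gives $M=\tau^n(M)\subseteq\tau(M)$, so $\tau(M)=M$. The same reasoning shows that $\rho^{-1}(\Lambda_M)\subseteq\Lambda_M$ forces $\Lambda_M=\rho^{-n}(\Lambda_M)\subseteq\rho^{-1}(\Lambda_M)$, hence equality.

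Moreover, $M$ is $G$-stable if and only if $\tau(M)\subseteq M$, because $G$ is generated by $\tau$. On the lattice side, Theorem 1.1 says it suffices to check $\rho(\sigma(v_i))\in\Lambda_M$ on a $\mathbb{Z}$-basis $v_1,\dots,v_n$ of $M$. Since $\rho(\Lambda_M)\subseteq\Lambda_M$ is equivalent to equality, and that is equivalent to $\rho^{-1}(\Lambda_M)=\Lambda_M$, this basis check corresponds to checking $\tau^{-1}(v_i)\in M$, or equivalently, by the argument above, $\tau(v_i)\in M$. Combining these equivalences proves the theorem in both directions.

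The main obstacle is the first step, namely pinning down the coordinate convention of $\sigma$. In the totally complex case with the real-and-imaginary-part embedding, the action of $\tau$ permutes the complex embeddings. It then acts on $\mathbb{R}^n$ by a signed or conjugating permutation, which need not be a power of $\rho$. I would handle this by ordering the embeddings along the orbit of $\tau$, so that $\sigma$ is the standard ordering above, possibly after an isometry that the paper fixes. I would then verify directly that $\sigma\circ\tau=\rho^{-1}\circ\sigma$ under that convention. Everything after this identity is formal.
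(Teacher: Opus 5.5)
Your proposal is correct and follows essentially the paper's argument. Your ordering $\sigma(x)=(x,\tau(x),\dots,\tau^{n-1}(x))$ is exactly the paper's convention (3.1) with $\theta=\tau^{-1}$, and your identity $\sigma\circ\tau=\rho^{-1}\circ\sigma$ is the paper's $\rho(\sigma(\alpha))=\sigma(\theta(\alpha))$, from which both directions follow (the paper's coordinate comparison in the converse is just injectivity of $\sigma$). Your worry about the totally complex case does not arise here, since the paper takes $\sigma$ with values in $\mathbb{C}^n$, so $\rho$ simply permutes the complex coordinates.
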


Let $\mathbb{K}$ be a Galois number field. By defining a set by restricting the automorphisms of the Galois group to a $\mathbb{Z}$-module $M$, we obtain a new set $G_M = \{\tau|_M~:~\tau\in G\}$. This set is not a subgroup of the Galois group, but we stabilize a condition that guarantees that $G_M$ is a group and a convergence with the Galois group. With this, we prove a generalization of Theorem 1.1, showing for general Galois number fields a criterion to determine whether an algebraic lattice is cyclic, i.e., proving that the algebraic lattice $\sigma(M)$ is cyclic by using the group $G_M$. This is the content of the following theorem, which will be demonstrated in Section 3:

\begin{theorem}\label{teorema_main} Let $\mathbb{K}$ be a Galois number field, $M$ be a $\mathbb{Z}$-module of $\mathbb{K}$, $G$ be the Galois group of $\mathbb{K}/\mathbb{Q}$, and $G_M = \{\tau|_M~:~\tau\in G\}$. The lattice $\Lambda=\sigma(M)$ is cyclic if and only if $G_M$ is a cyclic group.
\end{theorem}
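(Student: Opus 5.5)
The plan is to reduce the cyclicity of $\Lambda=\sigma(M)$ to a statement about how a single Galois automorphism acts on $M$, and then to read off the group structure of $G_M$. Throughout I take $M$ of full rank $n=[\mathbb{K}:\mathbb{Q}]$, so that $M$ spans $\mathbb{K}$ over $\mathbb{Q}$. I also take $\sigma(x)=(\tau_1(x),\ldots,\tau_n(x))$ for a fixed ordering $G=\{\tau_1=\mathrm{id},\tau_2,\ldots,\tau_n\}$, since $\sigma$ depends on this ordering; in the totally real case these coordinates are real, and otherwise I work coordinatewise in $\mathbb{C}^n$.

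\emph{Step 1 (restriction is faithful).} Since $M$ spans $\mathbb{K}$ over $\mathbb{Q}$, two automorphisms that agree on $M$ agree on $\mathbb{K}$. Hence $\tau\mapsto \tau|_M$ is a bijection $G\to G_M$. Moreover, $G_M$ is a group under composition of maps $M\to M$ exactly when $\tau(M)\subseteq M$ for all $\tau\in G$. Each $\tau$ is injective and $\mathbb{Z}$-linear, and $\tau(M)$ is again a lattice of rank $n$. Because $\tau$ has finite order, $\tau(M)\subseteq M$ forces $\tau(M)=M$. In that case $G_M\cong G$.

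\emph{Step 2 (the shift as an automorphism).} By Theorem 1.1 it suffices to test $\rho(\sigma(x))\in\Lambda$ for $x$ in a basis of $M$. Suppose $\rho(\sigma(x))=\sigma(y)$ with $y\in M$. Comparing first coordinates, and using $\tau_1=\mathrm{id}$, gives $y=\tau_n(x)$. Comparing the remaining coordinates gives $\tau_i\tau_n(x)=\tau_{i-1}(x)$ for $2\le i\le n$. If this holds for every $x$ in a basis of $M$, then by $\mathbb{Q}$-linearity it holds on all of $\mathbb{K}$, so
\[
\tau_i\circ\tau_n=\tau_{i-1}\quad(2\le i\le n),\qquad \tau_n(M)\subseteq M .
\]
Iterating gives $\tau_i=\tau_n^{-(i-1)}$ for all $i$. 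Thus $\rho$ on $\Lambda$ is exactly $\sigma\circ\tau_n\circ\sigma^{-1}$, the group $G$ is cyclic generated by $\tau_n$, and every element of $G$ maps $M$ into $M$. By Step 1, $G_M$ is then a cyclic group generated by $\tau_n|_M$.

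\emph{Step 3 (converse).} Suppose $G_M$ is a cyclic group. By Step 1 every $\tau\in G$ stabilizes $M$, and $G\cong G_M$ is cyclic. So $\mathbb{K}/\mathbb{Q}$ is a cyclic extension and $M$ is $G$-stable, and Theorem 1.2 gives that $\Lambda_M=\sigma(M)$ is cyclic. This uses the convention that the embeddings are listed in cyclic order $\tau_i=g^{\,i-1}$ for a generator $g$, presumably the same convention as in Theorem 1.2. One can also verify it directly: with this ordering, $\rho(\sigma(x))=\sigma(g^{-1}(x))$ and $g^{-1}(x)\in M$.

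The main obstacle I expect is the bookkeeping around the ordering in $\sigma$ and the precise meaning of ``$G_M$ is a cyclic group''. The forward direction produces the specific cyclic ordering $\tau_i=\tau_n^{-(i-1)}$, whereas the converse must be stated relative to an ordering compatible with a generator. I would fix this by adopting the ordering convention of Theorem 1.2 explicitly, and by proving in Step 1 the stability lemma ($G_M$ is closed under composition iff $\tau(M)=M$ for all $\tau$) before the main argument. The other delicate point is passing from the identities on a basis of $M$ to identities of field automorphisms; this is exactly where the full-rank hypothesis on $M$ is used.
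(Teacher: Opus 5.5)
Your argument is correct when $M$ has rank $n$, but that hypothesis is one you added; the theorem does not contain it. The statement puts no rank condition on $M$, and the paper defines $G_M$ explicitly for $\mathbb{Z}$-modules ``not necessarily of rank $n$''. In full rank, your Steps 1--3 show that ``$G_M$ is a cyclic group'' just means ``$G$ is cyclic and $M$ is $G$-stable'', so you have essentially reproved Theorem 1.2. The new content of the statement is in lower rank, and there your argument breaks down:
\begin{itemize}
\item the map $G\to G_M$ need not be injective;
\item the passage from identities on a basis of $M$ to identities $\tau_i\circ\tau_n=\tau_{i-1}$ on all of $\mathbb{K}$ ``by $\mathbb{Q}$-linearity'' is unavailable;
\item Step 3 cannot appeal to Theorem 1.2, because $G$ need not be cyclic.
\end{itemize}
Take the paper's own example, $\mathbb{K}=\mathbb{Q}(\sqrt[3]{2},\omega)$ and $M=\mathbb{Z}[\omega]$. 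Here $G\cong S_3$, but $G_M\cong S_3/A_3$ is cyclic of order $2$. And $\sigma(M)$ is indeed cyclic once the six automorphisms are listed so that their restrictions to $\mathbb{Q}(\omega)$ alternate between the identity and complex conjugation.

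The repair, which is the paper's route, is to work only with restrictions.
\begin{itemize}
\item Forward direction: keep your coordinate comparison but stay on $M$. From $\tau_n(M)\subseteq M$ and finite order you get $\tau_n(M)=M$. Then $\tau_i|_M=\tau_{i-1}|_M\circ(\tau_n|_M)^{-1}$ shows inductively that $\tau_i(M)\subseteq M$ and $\tau_i|_M=(\tau_n|_M)^{-(i-1)}$. Hence $G_M=\langle\tau_n|_M\rangle$, with no claim about $G$.
\item Converse: let $G_M=\langle g|_M\rangle$ have order $d$. Since $G_M\cong G/\ker(\tau\mapsto\tau|_M)$, we have $d\mid n$ and every fibre of $G\to G_M$ has $n/d$ elements. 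So $G$ can be listed as $\tau_1,\dots,\tau_n$ with $\tau_i|_M=(g|_M)^{i-1}$. Then $\rho(\sigma(x))=\sigma(g^{n-1}(x))$, because $(g|_M)^n=\mathrm{id}_M$.
\end{itemize}
The paper simply writes $\sigma|_M(\alpha)=(\alpha,\tau(\alpha),\dots,\tau^{n-1}(\alpha))$ without justifying that such an ordering exists. Your explicit attention to the ordering convention is therefore well placed, and the fibre count is what makes that ordering legitimate.
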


Finally, making a further connection with Hilbert's ramification theory, we completely characterize the (fractional or integral) ideals of $\mathbb{K}$ whose Minkowski embeddings give rise to cyclic lattices by proving the following result:
\begin{theorem}
    Let $\mathbb{K}$ be a cyclic number field, and let $\mathcal{I}$ be a (fractional or integral) ideal in $\mathbb{K}$. The lattice $\Lambda=\sigma(\mathcal{I})$ is cyclic if and only if
$$\mathcal{P}~\textnormal{and}~\mathcal{Q}~\textnormal{are conjugate} ~\Longrightarrow~ \nu_\mathcal{I}(\mathcal{P})=\nu_\mathcal{I}(\mathcal{Q}).$$
\end{theorem}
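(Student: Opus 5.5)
The plan is to reduce the statement to the earlier criterion for cyclic fields (Theorem 1.2): since $\mathbb{K}$ is cyclic and $\mathcal{I}$ is a $\mathbb{Z}$-module of rank $n$, the lattice $\sigma(\mathcal{I})$ is cyclic if and only if $\mathcal{I}$ is $G$-stable. This presumes that the Minkowski embedding $\sigma$ is ordered along powers of a generator $\tau$ of $G$, as in the setting of Theorem 1.2. So it suffices to show that $\tau(\mathcal{I})=\mathcal{I}$ for all $\tau\in G$ exactly when the exponents $\nu_\mathcal{I}(\mathcal{P})$ are constant on each $G$-orbit of primes. Here ``$\mathcal{P}$ and $\mathcal{Q}$ are conjugate'' means $\mathcal{Q}=\tau(\mathcal{P})$ for some $\tau\in G$.

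Write the unique factorization $\mathcal{I}=\prod_{\mathcal{P}}\mathcal{P}^{\nu_\mathcal{I}(\mathcal{P})}$ in the group of fractional ideals of $\mathcal{O}_\mathbb{K}$, with exponents in $\mathbb{Z}$ and almost all zero. The first step is to record that each $\tau\in G$ induces a group automorphism of the fractional ideals. Indeed, $\tau(\mathcal{O}_\mathbb{K})=\mathcal{O}_\mathbb{K}$, $\tau(\mathfrak{a}\mathfrak{b})=\tau(\mathfrak{a})\tau(\mathfrak{b})$, and $\tau$ maps nonzero primes to nonzero primes because it is a ring automorphism of $\mathcal{O}_\mathbb{K}$. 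Hence
$$\tau(\mathcal{I})=\prod_{\mathcal{P}}\tau(\mathcal{P})^{\nu_\mathcal{I}(\mathcal{P})},\qquad\text{so}\qquad \nu_{\tau(\mathcal{I})}(\tau(\mathcal{P}))=\nu_\mathcal{I}(\mathcal{P}).$$

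For the forward direction, assume $\sigma(\mathcal{I})$ is cyclic. Then $\tau(\mathcal{I})=\mathcal{I}$ for every $\tau$. If $\mathcal{Q}=\tau(\mathcal{P})$, the identity above gives $\nu_\mathcal{I}(\mathcal{Q})=\nu_{\tau(\mathcal{I})}(\tau(\mathcal{P}))=\nu_\mathcal{I}(\mathcal{P})$.

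For the converse, assume the exponents are constant on conjugacy classes. Group the factorization over primes $p\in\mathbb{Z}$. By Hilbert's theory for Galois extensions, the primes above $p$ form a single $G$-orbit $\{\mathcal{P}_1,\dots,\mathcal{P}_g\}$. Therefore $\mathcal{I}=\prod_p\bigl(\mathcal{P}_{1}\cdots\mathcal{P}_{g}\bigr)^{e_p}$, and each block $\mathcal{P}_1\cdots\mathcal{P}_g$ is permuted into itself by every $\tau$. Hence $\tau(\mathcal{I})=\mathcal{I}$, i.e.\ $\mathcal{I}$ is $G$-stable, and Theorem 1.2 yields that $\sigma(\mathcal{I})$ is cyclic.

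The main point needing care is the justification that $\tau$ acts multiplicatively on fractional ideals and preserves primality. The transitivity of $G$ on the primes above $p$ is only used to interpret ``conjugate'' as lying over the same rational prime. If conjugacy is instead defined directly as $\mathcal{Q}=\tau(\mathcal{P})$, transitivity is not needed at all, since the converse then only uses that $G$ permutes each orbit.
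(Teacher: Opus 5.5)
Your proof is correct and follows the paper's route. The paper also reduces, via Theorem 1.2, to the $G$-stability of $\mathcal{I}$, and then characterizes $G$-stability through unique factorization and the fact that $G$ permutes the primes above each rational prime (its Lemma 3.4); your forward direction via $\nu_{\tau(\mathcal{I})}(\tau(\mathcal{P}))=\nu_{\mathcal{I}}(\mathcal{P})$ is a slightly cleaner version of the paper's block-by-block comparison, and you rightly allow negative exponents for fractional ideals.
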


This theorem provides a full characterization of when $\mathbb{Z}$-modules or ideals of the number field can provide a cyclic lattice, generalizing previous results from Fukshansky and Kogan \cite{Lenny}. We are now ready to proceed.

\section{Cyclic Lattices}
The definition of cyclic and quasi-cyclic lattices requires that the infinitely many points of the lattice remain in \(\Lambda\) after the rotation $\rho$ is applied. A finite algorithm to verify whether a lattice is quasi-cyclic consists of checking only the basis vectors of the lattice.

\begin{lemma} \label{teo2}
Let \( \Lambda \subset \mathbb{R}^n \) be a lattice with basis \( B = \{\vec{v}_1, \ldots, \vec{v}_m\} ,\) where \( m \leq n \). Then, \( \Lambda \) is {\(k\)-cyclic} (i.e., quasi-cyclic with cyclicity factor \( k \)) if and only if \( k \) is the smallest positive integer such that for every basis vector \( \vec{v}_i \in B \), the \( k \)-th rotation \( \rho^k(\vec{v}_i) \) belongs to \( \Lambda \).
\end{lemma}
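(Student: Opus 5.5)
The plan is to reduce the statement to a single fact: for a fixed positive integer $j$, one has $\rho^j(\Lambda)\subseteq\Lambda$ if and only if $\rho^j(\vec{v}_i)\in\Lambda$ for every $\vec{v}_i\in B$. Granting this, the set of positive integers $j$ with $\rho^j(\Lambda)\subseteq \Lambda$ coincides with the set of positive integers $j$ with $\rho^j(B)\subseteq\Lambda$. The cyclicity factor is, by definition, the minimum of the first set. So it equals the minimum of the second set, which is exactly the assertion of the lemma. The set is nonempty because $\rho^n=\mathrm{id}$.

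To prove the fact, I would use only the linearity of $\rho$ and the definition of a basis. The forward direction is immediate, since $B\subseteq\Lambda$. For the converse, I take an arbitrary $\vec{x}\in\Lambda$ and write $\vec{x}=\sum_{i=1}^m a_i\vec{v}_i$ with $a_i\in\mathbb{Z}$. Since $\rho^j$ is a linear operator on $\mathbb{R}^n$,
\[
\rho^j(\vec{x})=\sum_{i=1}^m a_i\,\rho^j(\vec{v}_i).
\]
This is an integer combination of elements of $\Lambda$, and $\Lambda$ is a $\mathbb{Z}$-module, so $\rho^j(\vec{x})\in\Lambda$.

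One point needs care. The definition of $k$-cyclic in the introduction is phrased as $\rho^k(\Lambda)=\Lambda$, while the cyclicity factor is phrased via the inclusion $\rho^k(\vec{x})\in\Lambda$. I will show these agree, so that the inclusion version suffices. Suppose $\rho^j(\Lambda)\subseteq\Lambda$. Then $\rho^{j\ell}(\Lambda)\subseteq\Lambda$ for all $\ell\geq 1$. Taking $\ell=n-1$ and using $\rho^n=\mathrm{id}$, we get $\rho^{-j}=\rho^{j(n-1)}$, hence $\rho^{-j}(\Lambda)\subseteq\Lambda$. Applying $\rho^j$ to both sides gives $\Lambda\subseteq\rho^j(\Lambda)$, and so $\rho^j(\Lambda)=\Lambda$. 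Alternatively, in the full-rank case one can note that $\rho^j$ is an isometry, so $\rho^j(\Lambda)$ is a sublattice of the same covolume and must equal $\Lambda$. The first argument is preferable because it works for lattices of any rank $m\leq n$.

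I expect this inclusion-versus-equality issue to be the only real subtlety; the rest is bookkeeping. With it settled, minimality transfers directly between the two characterizations, and the lemma follows.
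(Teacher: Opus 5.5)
Your proposal is correct and follows the same route as the paper: linearity of $\rho^j$ applied to integer combinations of the basis vectors, then transferring minimality between the two conditions. Your set-equality phrasing handles both directions at once. Your argument that $\rho^j(\Lambda)\subseteq\Lambda$ already forces $\rho^j(\Lambda)=\Lambda$ (via $\rho^{-j}=\rho^{j(n-1)}$) fills a step the paper skips, since its proof only establishes the inclusion while $k$-cyclicity is defined by equality.
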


\begin{proof}
The forward implication is trivial. For the converse, assume \( k \) is the smallest integer satisfying \( \rho^k(\vec{v}_i) \in \Lambda \) for all \( \vec{v}_i \in B \). Let \( \vec{x} \in \Lambda \) be an arbitrary lattice vector. Since \( B \) is a basis, \( \vec{x} \) can be written as an integer linear combination:
$\vec{x} = \sum_{i=1}^m a_i \vec{v}_i$, with $a_i \in \mathbb{Z}.$
Applying the \( k \)-th rotation \( \rho^k \) yields:
\(
\rho^k(\vec{x}) = \sum_{i=1}^m a_i \rho^k(\vec{v}_i).
\)
By hypothesis, \( \rho^k(\vec{v}_i) \in \Lambda \) for each \( i \), and since \( \Lambda \) is a lattice (closed under integer linear combinations), it follows that \( \rho^k(\vec{x}) \in \Lambda \). To show minimality, suppose for contradiction that there exists \( \ell < k \) such that \( \rho^\ell(\Lambda) = \Lambda \). Then, in particular, \( \rho^\ell(\vec{v}_i) \in \Lambda \) for all \( \vec{v}_i \in B \), contradicting the minimality of \( k \). Thus, \( \Lambda \) is \( k \)-cyclic.
\end{proof}

Since quasi-cyclic lattices are a generalization of cyclic lattices, it follows that the finite verification algorithm from the theorem above is valid particularly for the cyclic case, which proves the Theorem 1.1. Moreover, if we consider a k-cyclic lattice of rank n, the cyclicity factor k will always be a divisor of n:

\begin{proposition} 
Let $\Lambda$ be a $k$-cyclic lattice of dimension $n$. Then the cyclicity factor $k$ divides $n$.
\end{proposition}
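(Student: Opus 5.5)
The plan is to study the set $S=\{\,j\in\mathbb{Z} : \rho^j(\Lambda)=\Lambda\,\}$ and show that it is a subgroup of $\mathbb{Z}$ containing $n$. First, $\rho$ is a linear bijection of $\mathbb{R}^n$ with $\rho^n=\mathrm{id}$, so $n\in S$ trivially. Next, if $j,\ell\in S$ then
$$\rho^{j+\ell}(\Lambda)=\rho^j(\rho^\ell(\Lambda))=\rho^j(\Lambda)=\Lambda,$$
and applying $\rho^{-j}$ to $\rho^j(\Lambda)=\Lambda$ gives $\rho^{-j}(\Lambda)=\Lambda$. Hence $S$ is a subgroup of $\mathbb{Z}$, and therefore $S=d\mathbb{Z}$, where $d$ is its least positive element.

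The step I expect to be the main subtlety is identifying $d$ with the cyclicity factor $k$. The paper defines $k$ as the least $k$ with $\rho^k(\vec x)\in\Lambda$ for all $\vec x\in\Lambda$, which is only the inclusion $\rho^k(\Lambda)\subseteq\Lambda$, not equality. So I must show that inclusion forces equality. I would first try a lattice argument. $\rho^k$ is an orthogonal map, so $\rho^k(\Lambda)$ is a lattice of the same rank and the same covolume (determinant of its Gram matrix) as $\Lambda$. A sublattice of full rank in $\Lambda$ with the same covolume has index $1$, so it equals $\Lambda$.

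If one prefers to avoid covolumes, a second route works. Iterating the inclusion gives
$$\Lambda=\rho^{kn}(\Lambda)\subseteq \rho^{k}(\Lambda)\subseteq\Lambda,$$
because $\rho^{kn}=\mathrm{id}$ and each application of $\rho^k$ preserves the chain $\rho^{k(i+1)}(\Lambda)\subseteq\rho^{ki}(\Lambda)$. This forces equality as well. Thus the least $k$ with $\rho^k(\Lambda)\subseteq\Lambda$ coincides with the least positive element of $S$, namely $k=d$.

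Finally, since $n\in S=k\mathbb{Z}$, we conclude that $k\mid n$. Equivalently, without the subgroup language, one writes $n=qk+r$ with $0\le r<k$. Then $\rho^r(\Lambda)=\rho^{n}\rho^{-qk}(\Lambda)=\Lambda$. By minimality of $k$ this forces $r=0$, where the case $r=0$ is permitted because $r$ is not positive.
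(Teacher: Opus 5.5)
Your proof is correct and is essentially the paper's argument. The paper writes $n=kq+r$ with $0\le r<k$, uses $\rho^n=\mathrm{id}$ and $\rho^{kq}(\Lambda)=\Lambda$ to get $\rho^r(\Lambda)=\Lambda$, and concludes $r=0$; this is exactly your closing paragraph, and your subgroup $S=k\mathbb{Z}$ repackages the same division-with-remainder idea. You are slightly more careful than the paper in two places: you get $r=0$ from the minimality of $k$ (the paper's intermediate step ``$\rho^r=\mathrm{Id}$'' does not literally follow from $\rho^r(\Lambda)=\Lambda$), and you check that the inclusion $\rho^k(\Lambda)\subseteq\Lambda$ in the definition of the cyclicity factor already forces equality, which the paper simply assumes.
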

\begin{proof}
By hypothesis, the lattice is $k$-cyclic. That is, $\rho^k(\Lambda)=\Lambda$. 
We have $n = kq + r$, where $k, q, r \in \mathbb{N}$ and $0 \leq r < k$. 
Then
$$\Lambda = \rho^n(\Lambda) = \rho^r(\rho^{kq}(\Lambda)) = \rho^r(\Lambda)  \Rightarrow \rho^r = \text{Id} \Rightarrow r = 0 \Rightarrow k | n.$$ \end{proof}

An important class of lattices is that of integral lattices, which are defined by the condition $\langle \vec{x},\vec{y} \rangle \in \mathbb{Z}$ for all $\vec{x},\vec{y} \in \Lambda$. In \cite{Lenny}, the cyclicity of certain families of integral lattices and their duals is investigated. Specifically, that paper shows that the $A_n$ lattices and their duals $A_n^*$ are both {strongly cyclic} lattices of rank $n$ in $\mathbb{R}^{n+1}$. Moreover, it proves that, for all $n \geq 2$, the $D_n$ lattices and their duals $D_n^*$ are both {cyclic} lattices of full rank in $\mathbb{R}^n$, being {strongly cyclic} if and only if $n$ is odd. Finally, \cite{Lenny} shows that the self-dual lattice $E_8 = E_8^*$ is {cyclic} but not {strongly cyclic} in $\mathbb{R}^8$, and that $E_6$ and $E_7$ are non-cyclic sublattices of $E_8$ in $\mathbb{R}^6$ and $\mathbb{R}^7$, respectively.

Among integral lattices, some subclasses are particularly noteworthy, such as even and odd lattices: an integral lattice is called {even} if $\langle \vec{x}, \vec{x} \rangle \in 2\mathbb{Z}$ for all $\vec{x} \in \Lambda$; otherwise, we say $\Lambda$ is {odd}. An important special case of integral lattices is that of unimodular lattices, which satisfy $\det(\Lambda) = \pm 1$. These lattices are self-dual ($\Lambda^* = \Lambda$). Particularly relevant are the \emph{even unimodular} lattices, which exist only in dimensions divisible by $8$ \cite{slone}. Notable examples include the $E_8$ lattice, the Barnes–Wall lattice $\mathrm{BW}_{16}$, and the Leech lattice $\Lambda_{24}$, which achieve the highest known sphere packing densities in their respective dimensions (8, 16, and 24).

As an application of Theorem 1.1 and Lemma 2.1, we verify the cyclicity of two of the lattices mentioned above. The conclusion is that neither the Barnes–Wall lattice $\mathrm{BW}_{16}$ nor the Leech lattice $\Lambda_{24}$ is cyclic or quasi-cyclic. To reach this conclusion, it suffices to check the condition for a single basis: if it fails for one basis, then it fails for the lattice. A suitable basis for this verification is given in \cite{sueli} (Table 2.2, p. 24 and Table 2.3, p. 25). Computing whether the rooted vectors of this basis lie in the lattice, we find that they do not; hence, neither lattice is cyclic or quasi-cyclic.

\section{Cyclic Algebraic Lattices}

We consider $\mathbb{K}$ as a number field of degree \( n \) and $\mathcal{O}_{\mathbb{K}}$ the  {ring of algebraic integers} of $\mathbb{K}$. It is a known fact that there exist $n$ distinct $\mathbb{Q}$-monomorphisms $\sigma_i: \mathbb{K} \rightarrow \mathbb{C}$, for $i=1,2,\ldots,n$. A monomorphism $\sigma_i$ is called  {real} if $\sigma_i(\mathbb{K})\subset \mathbb{R}$. Otherwise, it is called  {complex}. Let
$r$ be the number of real monomorphisms and 
$s$ the number of pairs of complex monomorphisms. It is a known fact that, $n = r + 2s$.

The trace of $\alpha \in \mathbb{K}$ is
$\operatorname{Tr}_{\mathbb{K}}(\alpha)
:= \sigma_1(\alpha) + \cdots + \sigma_n(\alpha)$.
The space
$K_{\mathbb{R}}=\mathbb{K}\otimes_{\mathbb{Q}}\mathbb{R}$
embeds into
$\mathbb{R}^{r}\times\mathbb{C}^{2s}\subseteq\mathbb{C}^n$ as
\[
\{(x,y)\in\mathbb{R}^{r}\times\mathbb{C}^{2s}
: y_{s+j}=\overline{y}_j,\ 1\leq j\leq s\}
\cong \mathbb{R}^{r}\times\mathbb{C}^{s},
\]
where $y=(y_1,\ldots,y_{2s})$. This space becomes a Euclidean
space via the bilinear form
$\langle\alpha,\beta\rangle
:=\operatorname{Tr}_{\mathbb{K}}(\alpha\overline{\beta})
\in\mathbb{R}$ for $\alpha,\beta\in\mathbb{K}$. For related results concerning trace forms over cyclic number fields, see \cite{OliveiraEtAl2017}. The Minkowski embedding $\sigma:\mathbb{K}\rightarrow K_{\mathbb{R}}\subset\mathbb{C}^n$,
given by
$\sigma(x)=(\sigma_1(x),\dots,\sigma_n(x))$,
sends any $\mathbb{Z}$-module $M$ of rank $n$ in $\mathbb{K}$,
including $\mathcal{O}_{\mathbb{K}}$ and its ideals, to a full-rank
lattice $\Lambda_M=\sigma(M)$ in $K_{\mathbb{R}}$. Such lattices
are called \textit{algebraic lattices}. Whenever $M=\mathcal{I}$
is a fractional or integral ideal of $\mathbb{K}$, the lattice
$\sigma(\mathcal{I})$ is called an \textit{ideal lattice}.  

As shown in \cite{Lenny}, $\sigma({\mathcal{O}_\mathbb{K}})$ is cyclic (for suitable ordering of embeddings) if and only if $\mathbb{K} / \mathbb{Q}$ is a cyclic extension. If $\theta$ denotes the generator of the Galois group of $\mathbb{K}$ over $\mathbb{Q}$, the specific embedding ordering stems from the cyclic nature of the shift operator $\rho$ and the cyclic Galois structure, given by:
\begin{equation} \label{eq1}
    \theta^{n-k+1} = \sigma_k, \quad 1 \leq k \leq n.
\end{equation}


In general, let $G=\textnormal{Gal}(\mathbb{K}/\mathbb{Q})$ be the Galois group of a number field $\mathbb{K}$ over $\mathbb{Q}$ and let $M$ be a $\mathbb{Z}$-submodule of $\mathcal{O}_\mathbb{K}$. For $\tau \in G$, we say that $M$ is \textit{$\tau$-stable} if $\tau(M)\subseteq M$. If $\tau(M)\subseteq M$ for all $\tau \in G$, we say that $M$ is \textit{$G$-stable}. Since every element of $G$ is bijective, we note that $M$ be $G$-stable is equivalent to $\tau(M)=M$ for all $\tau \in G$. With these definitions and notation, we are ready to prove Theorem 1.2.

\begin{proof}
Let $\alpha \in M$. As $\mathbb{K}$ is a number field of degree $n$, then $n=r+2s$, where $r$ is the number of real embeddings and $2s$ is the number of complex ones. Since the number field is Galois, it is a known fact that is either totally imaginary $(n = 2s)$ or totally real $(n = r)$. Furthermore, from the fact that $\mathbb{K}$ is cyclic, it follows that $Gal(\mathbb{K}/\mathbb{Q})=\langle \theta \rangle$. Suppose that $M$ is $G$-stable, that is, $\sigma_i(M) \subseteq M$ for all $i$. 
Consider the ordering given in (3.1). Observe that $\theta^n=id$. Then, for every element $\sigma(\alpha) \in \Lambda_M$, we have

\[\begin{array}{ll}
\rho(\sigma(\alpha)) &  = \rho(\sigma_1(\alpha),\sigma_2(\alpha), \ldots, \sigma_n(\alpha)) \\ & = \rho(\theta^{n-1+1}(\alpha), \theta^{n-2+1}(\alpha), \ldots, \theta^{n-n+1}(\alpha)) \\ & = \rho(\theta^{n}(\alpha), \theta^{n-1}(\alpha), \ldots, \theta(\alpha)) \\
&= \rho(\alpha, \theta^{n-1}(\alpha), \ldots, \theta(\alpha)) \\ & = (\theta(\alpha), \alpha, \theta^{n-1}(\alpha), \ldots, \theta^2(\alpha)).
\end{array}\]
On the other hand, 
\[\begin{array}{ll}
\sigma(\theta(\alpha)) & = (\sigma_1(\theta(\alpha)),\sigma_2(\theta(\alpha)), \ldots, \sigma_n(\theta(\alpha))) \\ & = (\theta^n(\theta(\alpha)), \theta^{n-1}(\theta(\alpha)), \ldots, \theta(\theta(\alpha)) ) \\ &= (\theta(\alpha), \alpha, \theta^{n-1}(\alpha), \ldots, \theta^2(\alpha)).
\end{array}\] That is, $\rho(\sigma(\alpha)) = \sigma(\theta(\alpha)).$ As $M$ is $G$-stable, $\theta(\alpha) \in M,$ for all $\alpha \in M$, it follows that $\sigma(\theta(\alpha)) \in \Lambda_M$. Therefore, $\rho(\sigma(\alpha)) \in \Lambda_M$, that is, $\Lambda_M$ is closed under $\rho$. Hence, $\Lambda_M$ is cyclic. Conversely, suppose $\Lambda_M$ is cyclic, for some $M \subset \mathcal{O}_\mathbb{K}$. Then, for every $\alpha \in M$, we have  $\rho^k(\sigma(\alpha)) \in \Lambda_M$, for $1 \leq k \leq n$. Note that
\( \rho(\sigma(\alpha)) = \rho(\sigma_1(\alpha),\sigma_2(\alpha), \ldots, \sigma_n(\alpha)) = (\sigma_n(\alpha),\sigma_1(\alpha), \ldots, \sigma_{n-1}(\alpha))\in \Lambda_M. \) Therefore, there exists $\beta \in M$ such that $\sigma(\beta) \in \Lambda_M$ satisfies 
\( (\sigma_n(\alpha),\sigma_1(\alpha), \ldots, \sigma_{n-1}(\alpha))= (\sigma_1(\beta),\sigma_2(\beta), \ldots, \sigma_{n}(\beta)) \).
Then $\sigma_n(\alpha) = \sigma_1(\beta)$, that is, $\sigma_n(\alpha) = \beta \in M$, and also $\sigma_i(\alpha) = \sigma_{i+1}(\beta)$, for $i=1, \ldots, n-1$. In an analogous way, if we apply the operator $\rho$ again, it follows that, for some $\gamma \in M$, $\sigma_{n-1}(\alpha) = \sigma_1(\gamma) = \gamma \in M$. Repeating this process, we obtain that $\sigma_i(\alpha) \in M$, for all $i$.
\end{proof}

\begin{example} A trivial example that satisfies the condition of Theorem 1.2 is $M = \mathcal{O}_\mathbb{K}$, which has already been studied in \cite{Lenny}.
\end{example}

\begin{example} Let $\mathbb{K}$ be a cyclic number field of prime degree $p>2$ with conductor $n$. Let $t = Tr_{\mathbb{Q}(\zeta_n)/\mathbb{K}}(\zeta_n)$ and $m$ a positive integer. Consider the family of modules of $\mathcal{O}_\mathbb{K}$ given by
\[ \mathcal{M}_m = \left\{
 \alpha = \sum_{i=0}^{p-1} a_i \theta^i(t) \in \mathcal{O}_\mathbb{K} :
 a_0, \ldots, a_{p-1} \in \mathbb{Z}, \
\sum_{i=1}^{p-1} a_i \equiv 0 \pmod{m}
\right\}.
\] The modules $\mathcal{M}_m$ were proposed in \cite{nunes2017} and are consolidated in the literature for lattice construction purposes \cite{L5,R24}. In \cite{L4}, it was proved that when $p$ is unramified in $\mathbb{K}$, the $\mathbb{Z}$-modules $\mathcal{M}_m$, for $m \in \mathbb{Z}^*_+$, generalize the ramified prime ideals in $\mathbb{K}$. Notice that $\sigma_i({\mathcal{M}_m}) = {\mathcal{M}_m}$, for all $i = 1, \ldots, n$, i.e., ${\mathcal{M}_m}$ is $G$-stable. Therefore, the lattice $\Lambda_{\mathcal{M}_m}=\sigma(\mathcal{M}_m)$ is cyclic. 
In \cite{R24}, it was proved that $\mathcal{M}_{m}$ has rank $p$ and has a $\mathbb{Z}$-basis given by $\{mt, \theta(t)-t, \theta^{2}(t)-t, \ldots, \theta^{p-1}(t)-t\}$; this allows us to conclude that $\mathcal{M}_m$ is also strongly cyclic. 
\end{example}

\begin{example} Let $f(x) \in \mathbb{Z}[x]$ be a monic irreducible polynomial of degree $n \geq 2$ with splitting field $\mathbb{K}$. Consider the family of modules \(\mathcal{M}_f = \langle \alpha_1, \alpha_2, \ldots, \alpha_n \rangle \subset \mathcal{O}_\mathbb{K} \) proposed in \cite{FukshanskyKnight2025}, where $\alpha_1, \alpha_2, \ldots, \alpha_n$ are the roots of $f(x)$. Let $\text{Gal}(\mathbb{K}/\mathbb{Q}) = \{\sigma_1, \ldots, \sigma_n\}$. Then for each $1 \leq i \leq n$ and $1 \leq j \leq n$, $\sigma_j(\alpha_i) = \alpha_k$ for some corresponding $1 \leq k \leq n$. Consider $\Lambda_{M_f} = \sigma(M_f)$ the Euclidean lattice in $K_\mathbb{R}$. Notice that the Galois action sends one root to another. Thus, taking any element of the module $\mathcal{M}_f$ and applying any element of the Galois group, we obtain another root, that is, another element in $\mathcal{M}_f$. This means that $\mathcal{M}_f$ is $G$-stable; hence, by Theorem 1.2, all lattices generated from this family of modules are cyclic. In \cite{FukshanskyKnight2025}, the authors present hypotheses on the coefficients of $f(x)$ that yield well-rounded lattices under the Minkowsky embedding.\end{example}

In the following, let $\mathbb{K}$ be a Galois number field of degree $n$ and let $G=\textnormal{Gal}(\mathbb{K}/\mathbb{Q})$ denote its Galois group. Let $M$ be a $\mathbb{Z}$-submodule of $\mathbb{K}$ not necessarily of rank $n$. By restricting the automorphisms in $G$ by $M$, we have a new set $$G_M := \{\tau|_M~:~\tau\in G\}.$$ 

Notice that $G_M$ is not a subgroup of $G$, since the restriction elements in $G_M$ are not the same of that in $G$. In general, $G_M$ is not a group itself.  The following lemma gives a necessary and sufficient condition for $G_M$ to be a group and provides other consequences: 

\begin{lemma}\label{lemma_G_M} Let $\mathbb{K}$ be a Galois number field, $G=\textnormal{Gal}(\mathbb{K}/\mathbb{Q})$ be its Galois group, and $M$ be a $\mathbb{Z}$-submodule of $\mathbb{K}$ of rank $k \leq n$. Then:
\begin{enumerate}
    \item $G_M$ is group if and only if $M$ is $G$-stable;
    \item If $G_M$ is group, then $|G_M|$ divides $|G|$;
    \item If $G_M$ is group and $G$ is cyclic, then $G_M$ is cyclic.
\end{enumerate}
\end{lemma}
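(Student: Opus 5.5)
To prove the lemma, I first fix what the group structure on $G_M$ should be. Since the elements of $G_M$ are maps $M\to\mathbb{K}$, the natural operation is composition. Composition $\tau|_M\circ\tau'|_M$ only makes sense as an element of $G_M$ when $\tau'(M)\subseteq M$. So I read ``$G_M$ is a group'' as: composition is a well-defined, closed operation on $G_M$ that turns it into a group of bijections of $M$.

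For item (1), suppose first that $M$ is $G$-stable. Then each $\tau|_M$ maps $M$ into $M$, and by the remark before Theorem 1.2 it maps $M$ onto $M$, since $\tau(M)=M$. Hence $\tau|_M\circ\tau'|_M=(\tau\tau')|_M\in G_M$. The identity is $\mathrm{id}|_M$, the inverse is $(\tau^{-1})|_M$, and associativity is inherited from composition. Conversely, if $G_M$ is a group under composition, then each $\tau|_M$ must be a self-map of $M$ for the products $\tau'|_M\circ\tau|_M$ to be defined. Therefore $\tau(M)\subseteq M$ for every $\tau\in G$, which is exactly $G$-stability. This direction is essentially definitional, and the main obstacle of the whole lemma is to make this interpretation explicit so the equivalence is not vacuous.

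For items (2) and (3), I use the restriction map $\pi:G\to G_M$, $\tau\mapsto\tau|_M$. This map is surjective by the definition of $G_M$. When $M$ is $G$-stable it is a group homomorphism, since $(\tau\tau')|_M=\tau|_M\circ\tau'|_M$. By the first isomorphism theorem, $G_M\cong G/\ker\pi$, where $\ker\pi=\{\tau:\tau|_M=\mathrm{id}_M\}$. Lagrange's theorem then gives $|G_M|=[G:\ker\pi]$, which divides $|G|$; this is (2). For (3), if $G=\langle\theta\rangle$, then $G_M=\pi(G)=\langle\theta|_M\rangle$, since a homomorphic image of a cyclic group is cyclic.

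I could add the remark that $\ker\pi$ is the pointwise stabilizer of $M$. By Galois theory it equals $\mathrm{Gal}(\mathbb{K}/\mathbb{Q}(M))$, so $|G_M|=[\mathbb{Q}(M):\mathbb{Q}]$. This is not needed for the proof.
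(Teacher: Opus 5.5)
Your proposal is correct and follows essentially the same route as the paper. Item (1) is the definitional observation that composing restrictions is defined and closed only when $\tau(M)\subseteq M$ for every $\tau\in G$; the paper just calls this straightforward, and you spell it out. Items (2) and (3) come from the restriction epimorphism $\tau\mapsto\tau|_M$ and $G_M\cong G/\ker$, exactly as in the paper.

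One small point: the equality $\tau(M)=M$ you borrow is best justified by applying stability to $\tau^{-1}\in G$, which gives $M\subseteq\tau(M)$, rather than by bijectivity alone. Your closing identification $|G_M|=[\mathbb{Q}(M):\mathbb{Q}]$ is correct, though not needed.
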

\begin{proof} The first item is straightforward. Assuming that $G_M$ is a group, the other items follow from the fact that $G_M\cong  G/\textnormal{ker}(f)$, where $f:G\longrightarrow G_M$ is the group epimorphism defined as $f(\tau)=\tau|_M$ for all $\tau\in G$.
\end{proof}

\begin{corollary}
 Let $\mathbb{K}$ be a Galois number field of degree $n$ and let $G=\textnormal{Gal}(\mathbb{K}/\mathbb{Q})$ be its Galois group. If $G_M$ is a group and $M$ is a $\mathbb{Z}$-submodule of $\mathbb{K}$ of rank $n$, then $G_M$ is isomorphic to $G$.
\end{corollary}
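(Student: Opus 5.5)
The plan is to use the restriction epimorphism $f:G\to G_M$, $f(\tau)=\tau|_M$, from the proof of Lemma \ref{lemma_G_M}. Since $G_M$ is assumed to be a group, Lemma \ref{lemma_G_M}(1) shows that $M$ is $G$-stable. Hence $f$ is a well-defined surjective group homomorphism, with composition as the operation on both sides, and $G_M\cong G/\ker(f)$. It therefore suffices to prove that $\ker(f)$ is trivial, i.e. that an automorphism $\tau\in G$ acting as the identity on $M$ must be the identity of $\mathbb{K}$.

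The key step uses the rank hypothesis. Since $M$ is a $\mathbb{Z}$-submodule of $\mathbb{K}$ of rank $n=[\mathbb{K}:\mathbb{Q}]$, a $\mathbb{Z}$-basis $\{m_1,\ldots,m_n\}$ of $M$ consists of $n$ elements that are linearly independent over $\mathbb{Z}$, hence over $\mathbb{Q}$; one clears denominators in any rational dependence to see this. These elements therefore form a $\mathbb{Q}$-basis of $\mathbb{K}$, so $\mathbb{Q}M=\mathbb{K}$. Now suppose $\tau|_M=\mathrm{id}_M$. Any $x\in\mathbb{K}$ can be written as $x=\sum q_i m_i$ with $q_i\in\mathbb{Q}$. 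Because $\tau$ is $\mathbb{Q}$-linear, $\tau(x)=\sum q_i\tau(m_i)=\sum q_i m_i=x$. Thus $\tau=\mathrm{id}$ and $\ker(f)=\{\mathrm{id}\}$.

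It follows that $f$ is an isomorphism, so $G_M\cong G$ and in particular $|G_M|=|G|=n$. No step is a real obstacle. The only point needing care is the passage from $\mathbb{Z}$-rank $n$ to $\mathbb{Q}$-span equal to $\mathbb{K}$, which is the clearing-denominators remark above. One should also note explicitly that the group structure on $G_M$ is composition of maps $M\to M$, which is well defined exactly because $M$ is $G$-stable. This is what makes $f$ a homomorphism.
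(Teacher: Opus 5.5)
Your proposal is correct and follows essentially the same route as the paper. Both arguments use the restriction epimorphism $f:G\to G_M$, write $G_M\cong G/\ker(f)$, and show $\ker(f)$ is trivial because a $\mathbb{Z}$-basis of a rank-$n$ module $M$ is a $\mathbb{Q}$-basis of $\mathbb{K}$, fixed pointwise by any $\tau\in\ker(f)$. Your additional remarks (clearing denominators, $\mathbb{Q}$-linearity of $\tau$, and well-definedness of composition on $G_M$ via $G$-stability) only make explicit what the paper leaves implicit.
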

\begin{proof} From Lemma 3.1, we have $G_M \cong  G/\textnormal{ker}(f)$, so it only remains to prove that $\textnormal{ker}(f) = \{ \tau \in G \;  | \; \tau|_M = \text{id}_\mathbb{K}\} = \{ \tau \in G \;  | \; \tau(m) = m, \; \forall m \in M\}$ is equal to
$\{\text{id}_\mathbb{K}\}$. Since $M$ has rank $n$, any basis of $M$ over $\mathbb{Z}$ is a basis of $\mathbb{K}$ over $\mathbb{Q}$. Every element $\tau$ of $G$ in the kernel fixes all elements of this basis, so $\tau = \text{id}_\mathbb{K}$. Thus, $\textnormal{ker}(f) = \{ \text{id}_\mathbb{K}\}$. Hence $G_M \cong  G$.
\end{proof}

This corollary is relevant because it shows that in the full-rank case, $G_M$ can be identified with the entire Galois group $G$. For $\mathbb{Z}$-modules of smaller rank, Lemma 3.1 can also be satisfied; that is, there exist $\mathbb{Z}$-modules of non-full rank that are $G$-stable, in which case $G_M$ is a group. An example can be constructed using the polynomial $f(x) = x^3 - 2$. Its roots are $\sqrt[3]{2}, \sqrt[3]{2} \omega, \sqrt[3]{2} \omega^2$, where $\omega = e^{2\pi i/3}$. The splitting field of is $\mathbb{K}=\mathbb{Q}(\sqrt[3]{2},\omega)$, which has degree 6 and Galois group $S_3$. If we consider the $\mathbb{Z}$-module $M=\mathbb{Z}[\omega]$ or the $\mathbb{Z}$-module $M=\langle\omega,\omega^2 \rangle_{\mathbb{Z}}$, the rank is 2 in a number field of degree 6. Notice that the map defined in the proof of Lemma 3.1 has $\text{ker}(f)=A_3$. Hence, $G_M \cong S_3/A_3$. 

Not every $\mathbb{Z}$-module of $\mathbb{K}$ is $G$-stable. An example of a non-$G$-stable submodule is a prime ideal $\mathcal{P} \in \mathcal{O}_\mathbb{K}$ above a prime $p \in \mathbb{Z}$. In this case, this prime ideal is only fixed by the elements of the decomposition group $\mathcal{D} = \{\sigma \in G \; | \; \sigma(\mathcal{P}) = \mathcal{P}\}$, which can be a proper subgroup of the Galois group $G$. An example where the decomposition group is a proper subgroup of $G$ is when the rational prime $p$ is unramified and decomposed. In this case, $|D| = e \cdot f$, but $|G| = e \cdot f \cdot g$, where $e$ is the ramification index, $f$ is the inertia degree (or residue field degree), and $g$ is the number of distinct prime ideals in $L$ lying above ${p}$. When $g > 1$, it follows that $|D| < |G|$, so $D \subsetneq G$. This allows us to conclude that $D$ and $G_M$ will only coincide if $\mathcal{P}$ is the only ideal in the ring of integers above the rational prime $p$, which proves the following corollary.

\begin{corollary} Let $\mathbb{K}$ be a Galois number field. If $\mathcal{P}$ is a prime ideal in $\mathcal{O}_\mathbb{K}$ above $p \in \mathbb{Z}$, then $G_M$ is a group if and only if $p$ totally splits or is inert in $\mathcal{O}_\mathbb{K}$.
\end{corollary}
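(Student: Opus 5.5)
The plan is to combine Lemma 3.1(1) with the transitivity of the Galois action on the primes above $p$. Take $M=\mathcal{P}$, which is a $\mathbb{Z}$-submodule of $\mathbb{K}$ of rank $n$. By Lemma 3.1(1), $G_{\mathcal{P}}$ is a group if and only if $\mathcal{P}$ is $G$-stable, that is, $\tau(\mathcal{P})=\mathcal{P}$ for every $\tau\in G$. This holds if and only if the decomposition group $\mathcal{D}=\{\tau\in G : \tau(\mathcal{P})=\mathcal{P}\}$ equals $G$.

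Next, use that $G$ acts transitively on the set $\{\mathcal{P}=\mathcal{P}_1,\dots,\mathcal{P}_g\}$ of primes above $p$. The orbit--stabilizer theorem then gives $[G:\mathcal{D}]=g$. Together with $|\mathcal{D}|=ef$ and $n=efg$, this shows that $\mathcal{D}=G$ holds exactly when $g=1$, i.e. when $\mathcal{P}$ is the only prime of $\mathcal{O}_\mathbb{K}$ above $p$. This reproduces the reasoning in the paragraph preceding the corollary and settles the equivalence ``$G_{\mathcal{P}}$ is a group $\iff g=1$'' rigorously.

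The last step is to match $g=1$ with the dichotomy in the statement, and this is the main obstacle. When $p$ is unramified (the setting of the preceding discussion), $g=1$ forces $f=n$, so $p$ is inert; that gives one direction for the inert case. For the phrase ``totally splits'', I would first test it against the conventions: total splitting means $g=n$, hence $\mathcal{D}=\{\mathrm{id}\}$, and $\mathcal{P}$ is then not $G$-stable as soon as $n>1$. So the only way to make the stated equivalence hold is to read ``totally splits'' as the totally ramified case, $e=n$ and $g=1$, where $\mathcal{P}^n=p\mathcal{O}_\mathbb{K}$ and $\mathcal{P}$ is visibly $G$-stable.

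Even with that reading, a residual gap remains: when $p$ is ramified one can have $g=1$ with $1<e<n$ and $1<f<n$, and such a $p$ is neither inert nor totally ramified. I would close this either by restricting the statement to the unramified or totally ramified cases, or by replacing ``totally splits or is inert'' with the condition $g=1$, which is what the argument actually proves. I would state explicitly in the write-up that, taken literally with the standard meaning of ``totally splits'', the corollary fails for $n>1$, and that it is the condition $g=1$ that is equivalent to $G_{\mathcal{P}}$ being a group.
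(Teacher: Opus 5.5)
Your reduction matches the paper's argument. The paragraph before the corollary also uses Lemma 3.1(1) to equate ``$G_{\mathcal{P}}$ is a group'' with $\mathcal{D}=G$, then uses $|\mathcal{D}|=ef$ and $|G|=efg$ to arrive at $g=1$; your orbit--stabilizer step is a cleaner version of the same thing.

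Your diagnosis of the last step is also correct, and the gap lies in the statement, not in your argument. The paper jumps from ``$\mathcal{P}$ is the only prime above $p$'' to ``$p$ totally splits or is inert'', and the statement as written fails in both directions:
\begin{itemize}
\item In $\mathbb{Q}(i)$ the prime $5=(2+i)(2-i)$ splits completely, but complex conjugation sends $(2+i)$ to $(2-i)\neq(2+i)$. So $G_{\mathcal{P}}$ is not a group.
\item Over the ramified prime $2$ in $\mathbb{Q}(i)$, the ideal $(1+i)$ is fixed, since $1-i=-i(1+i)$. So $G_{\mathcal{P}}$ is a group, although $2$ is neither split nor inert.
\item Your mixed case also occurs: in $\mathbb{Q}(\zeta_{12})$ the prime $2$ has $e=f=2$ and $g=1$. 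So ``inert or totally ramified'' is not the right repair either.
\end{itemize}

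The paper's own lemma on $G$-stable ideals, applied to $\mathcal{I}=\mathcal{P}$, gives exactly your corrected version. You can therefore drop the speculative reading of ``totally splits'' and simply state and prove: $G_{\mathcal{P}}$ is a group if and only if $g=1$, citing these examples to show the literal statement fails.
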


With the results above, we have sufficient tools to prove Theorem 1.3:

\begin{proof} Consider $G=\{\sigma_1,\sigma_2,\ldots,\sigma_n\}$ with $\sigma_1=\textnormal{id}$. If $\Lambda$ is a cyclic lattice, for all $\alpha \in M$ there exists $\beta\in M$ such that $\rho(\sigma(\alpha))=\sigma(\beta)$, which implies
$$\beta=\sigma_n(\alpha) = \sigma_2^{-1}(\sigma_1(\alpha))=\sigma_3^{-1}(\sigma_2(\alpha))=\ldots = \sigma_n^{-1}(\sigma_{n-1}(\alpha)).$$
This means that $\sigma_n|_M=\sigma_k^{-1}|_M\circ \sigma_{k-1}|_M$ for $k=2,\ldots,n$. Thus, recursively, it is possible to see that $\sigma_k|_M = \sigma_2|_M^{k-1}$ for all $k=2,\ldots,n$. Therefore, $G_M = \langle \sigma_2|_M\rangle$, which is a cyclic group. Conversely, suppose that $G_M=\langle \tau|_M\rangle$ is cyclic group generated for some $\tau\in G$. It follows from Lemma 3.1 that $|G_M|$ divides $|G|=n$. Consequently, $\tau|_M^n=\textnormal{id}|_M$. The Minkowski embedding $\sigma$ restricted to $M$ can be defined, for all $\alpha\in M$, as $$\sigma|_M(\alpha)=(\textnormal{id}|_M(\alpha),\tau|_M(\alpha),\tau|_M^2(\alpha)\ldots,\tau|_M^{n-1}(\alpha)).$$ For each $\alpha\in M$, consider $\beta=\tau|_M^{n-1}(\alpha)\in M$. Since $\tau|_M^n=\textnormal{id}|_M$, then
$$\rho(\sigma(\alpha))=\rho(\sigma|_M(\alpha))=(\tau|_M^{n-1}(\alpha),\textnormal{id}|_M(\alpha),\ldots,\tau|_M^{n-2}(x)) = \sigma|_M(\beta)= \sigma(\beta).$$
This means that the lattice $\Lambda$ is cyclic.  
\end{proof}

By Lemma 3.1, $G_M$ is a group if and only if $M$ is $G$-stable. Thus, it is a consequence of the last theorem that $\Lambda=\sigma(M)$ is not a cyclic lattice if $M$ is not $G$-stable. However, clearly, it is not sufficient for $M$ to be G-stable for $\Lambda$ to be a cyclic lattice.

\begin{corollary} 
Let $n\in\mathbb{Z}$ a positive number. If $\mathbb{K}$ is a cyclic number field, then the lattice $\sigma(n\mathcal{O}_\mathbb{K})$ is cyclic.
\end{corollary}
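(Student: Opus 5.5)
The plan is to reduce the statement to Theorem 1.2 by checking that $M=n\mathcal{O}_\mathbb{K}$ satisfies its hypotheses. To avoid the clash with the degree $[\mathbb{K}:\mathbb{Q}]$, which is also called $n$, I will write the integer as $m$ and the degree as $d$. The order of embeddings is fixed as in (3.1), as Theorem 1.2 requires.

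First, I need $M=m\mathcal{O}_\mathbb{K}$ to be a $\mathbb{Z}$-submodule of $\mathbb{K}$ of rank $d$. Since $\mathcal{O}_\mathbb{K}$ is a free $\mathbb{Z}$-module of rank $d$ with integral basis $\{\omega_1,\ldots,\omega_d\}$, the set $\{m\omega_1,\ldots,m\omega_d\}$ is a $\mathbb{Z}$-basis of $m\mathcal{O}_\mathbb{K}$. For $m\neq 0$ these vectors are still $\mathbb{Q}$-linearly independent, so the rank is $d$. The statement says $m$ is positive, so the degenerate case $m=0$ does not arise.

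Second, I need $M$ to be $G$-stable. Every $\tau\in G=\textnormal{Gal}(\mathbb{K}/\mathbb{Q})$ maps algebraic integers to algebraic integers in $\mathbb{K}$, so $\tau(\mathcal{O}_\mathbb{K})=\mathcal{O}_\mathbb{K}$. Since $\tau$ fixes $\mathbb{Q}$, we get $\tau(m\alpha)=m\tau(\alpha)$. Hence $\tau(m\mathcal{O}_\mathbb{K})=m\,\tau(\mathcal{O}_\mathbb{K})=m\mathcal{O}_\mathbb{K}$ for every $\tau\in G$.

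With both hypotheses verified, Theorem 1.2 applied to the cyclic extension $\mathbb{K}/\mathbb{Q}$ shows that $\sigma(m\mathcal{O}_\mathbb{K})$ is cyclic. As a cross-check, the same conclusion follows directly from the cyclicity of $\sigma(\mathcal{O}_\mathbb{K})$ proved in \cite{Lenny}: since $\sigma(m\alpha)=m\sigma(\alpha)$ and $\rho$ is linear, $\rho(\sigma(m\mathcal{O}_\mathbb{K}))=m\,\rho(\sigma(\mathcal{O}_\mathbb{K}))=m\,\sigma(\mathcal{O}_\mathbb{K})=\sigma(m\mathcal{O}_\mathbb{K})$. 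There is no real obstacle here; the only care needed is the notational clash between the integer and the degree, and noting that positivity of $m$ guarantees full rank.
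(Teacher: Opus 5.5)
Your proof is correct and follows the paper's argument: both rest on the $G$-stability of $n\mathcal{O}_\mathbb{K}$, which holds because every automorphism fixes $n$ and maps $\mathcal{O}_\mathbb{K}$ onto itself. The paper routes this through Lemma 3.1 to get $G_{n\mathcal{O}_\mathbb{K}}$ cyclic and then invokes the $G_M$-criterion, whereas you check full rank and apply Theorem 1.2 directly, which is slightly more economical (your linearity cross-check is also valid).
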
 \begin{proof}
  Let $\theta$ be a generator of the Galois group $G$ of $\mathbb{K}$ over $\mathbb{Q}$. Then $\theta(n\mathcal{O}_\mathbb{K})=\theta(n)\theta(\mathcal{O}_\mathbb{K})=n\mathcal{O}_\mathbb{K}$, which means that $n\mathcal{O}_\mathbb{K}$ is $G$-stable. So $G_{n\mathcal{O}_\mathbb{K}}$ is a group by Lemma 3.1. It follows also from Lemma 3.1 that $G_{n\mathcal{O}_\mathbb{K}}$ is cyclic. Finally, Theorem 1.2 implies that $\sigma(n\mathcal{O}_\mathbb{K})$ is a cyclic lattice.  \end{proof}

\begin{remark}
Suppose that $\mathbb{K}$ is a cyclic number field and consider a $\mathbb{Z}$-module $M$ of $\mathbb{K}$ that is $G$-stable. By Lemma 3.1, $G_M$ is a cyclic group, once $G$ is cyclic. So Theorem 1.2 implies that $\Lambda=\sigma(M)$ is a cyclic lattice. This provides an alternative proof of one direction of Theorem 1.2.
\end{remark}
 
Let $\mathcal{P}$ and $\mathcal{Q}$ be prime ideals in $\mathcal{O}_\mathbb{K}$. We say that $\mathcal{P}$ and $\mathcal{Q}$ are \textit{conjugate} if there exists $\tau\in G=\textrm{Gal}(\mathbb{K}/\mathbb{Q})$ such that $\tau(\mathcal{P})=\mathcal{Q}$. It is a known fact in Algebraic Number Theory that two prime ideals $\mathcal{P}$ and $\mathcal{Q}$ in the ring of integers of a Galois number field are conjugate if and only if there are above the same prime number $p\in\mathbb{Z}$; moreover, the action of the Galois group of $\mathbb{K}$ over $\mathbb{Q}$ on the set of all prime ideals $S=\{\mathcal{P}_1,\mathcal{P}_2,\ldots,\mathcal{P}_k\}$ lying above the same prime number $p$ in $\mathbb{Z}$ permutes these prime ideals, that is, $S$ is $G$-stable \cite{neukirch} (chapter 1, $\S 9$).

Considering that the $G$-stability of a module $M$ determines if $G_M$ is a group (Lemma 3.1), we now characterize the ideals of a Galois number field $\mathbb{K}$ that are $G$-stable. Next, we denote by $\nu_\mathcal{I}(\mathcal{P})\geq 0$ the expoent of the prime ideal $\mathcal{P}$ in the factorization of a fractional or integral ideal $\mathcal{I}$ in $\mathbb{K}$.

\begin{lemma}
  \label{lemma_stable} Let $\mathbb{K}$ be a Galois number field and $\mathcal{I}$ be a (fractional or integral) ideal in $\mathbb{K}$. The ideal $\mathcal{I}$ is $G$-stable if and only if
$$\mathcal{P}~\textnormal{and}~\mathcal{Q}~\textnormal{are conjugate} ~\Longrightarrow~ \nu_\mathcal{I}(\mathcal{P})=\nu_\mathcal{I}(\mathcal{Q}).$$  
\end{lemma}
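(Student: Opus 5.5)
The plan is to use unique factorization of fractional ideals in the Dedekind domain $\mathcal{O}_\mathbb{K}$, together with the fact that each $\tau\in G$ acts on fractional ideals multiplicatively and permutes the nonzero prime ideals. First we check that for $\tau\in G$ and a fractional ideal $\mathcal{I}$, the set $\tau(\mathcal{I})$ is again a fractional ideal. Indeed, $\tau(\mathcal{O}_\mathbb{K})=\mathcal{O}_\mathbb{K}$, since $\tau$ maps roots of monic integer polynomials to roots of the same polynomials. Moreover $\tau$ is a ring automorphism of $\mathbb{K}$, so $\tau(\mathcal{I}\mathcal{J})=\tau(\mathcal{I})\tau(\mathcal{J})$ and $\tau(\mathcal{P})$ is a nonzero prime ideal whenever $\mathcal{P}$ is. Writing $\mathcal{I}=\prod_{\mathcal{P}}\mathcal{P}^{\nu_\mathcal{I}(\mathcal{P})}$ with integer exponents, almost all zero, we get
\[
\tau(\mathcal{I})=\prod_{\mathcal{P}}\tau(\mathcal{P})^{\nu_\mathcal{I}(\mathcal{P})},
\qquad\text{so}\qquad
\nu_{\tau(\mathcal{I})}(\tau(\mathcal{P}))=\nu_\mathcal{I}(\mathcal{P}).
\]
The second identity follows from uniqueness of factorization, since $\tau$ is a bijection on the set of primes.

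For sufficiency, assume that conjugate primes have equal exponents in $\mathcal{I}$. Let $\tau\in G$ and let $\mathcal{Q}$ be any prime. Set $\mathcal{P}=\tau^{-1}(\mathcal{Q})$, which is conjugate to $\mathcal{Q}$. The identity above gives $\nu_{\tau(\mathcal{I})}(\mathcal{Q})=\nu_\mathcal{I}(\mathcal{P})=\nu_\mathcal{I}(\mathcal{Q})$. All exponents of $\tau(\mathcal{I})$ and $\mathcal{I}$ therefore agree, so uniqueness of factorization yields $\tau(\mathcal{I})=\mathcal{I}$, and $\mathcal{I}$ is $G$-stable.

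For necessity, assume $\tau(\mathcal{I})\subseteq\mathcal{I}$ for all $\tau\in G$. As noted before Theorem 1.2, this gives $\tau(\mathcal{I})=\mathcal{I}$. The cleanest route applies the inclusion to both $\tau$ and $\tau^{-1}$: $\mathcal{I}=\tau(\tau^{-1}(\mathcal{I}))\subseteq\tau(\mathcal{I})$. This avoids any finiteness argument, which would be unavailable for a fractional ideal since it is not finite. Now let $\mathcal{P},\mathcal{Q}$ be conjugate, say $\tau(\mathcal{P})=\mathcal{Q}$. Then $\nu_\mathcal{I}(\mathcal{Q})=\nu_{\tau(\mathcal{I})}(\tau(\mathcal{P}))=\nu_\mathcal{I}(\mathcal{P})$.

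The only step needing care is the transport identity $\nu_{\tau(\mathcal{I})}(\tau(\mathcal{P}))=\nu_\mathcal{I}(\mathcal{P})$ for fractional ideals with negative exponents. I would handle it by writing $\mathcal{I}=\frac{1}{d}\mathcal{J}$ with $d\in\mathbb{Z}_{>0}$ and $\mathcal{J}$ integral. Since $\tau(d)=d$, we have $\tau(\mathcal{I})=\frac{1}{d}\tau(\mathcal{J})$, so the identity reduces to integral ideals. The conjugacy facts quoted from \cite{neukirch} are then not needed beyond the definition of conjugate primes.
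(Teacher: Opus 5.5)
Your proof is correct, but it is organized differently from the paper's.

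The paper groups the factorization by rational primes, $\mathcal{I}=\mathcal{I}_1\cdots\mathcal{I}_n$, with $\mathcal{I}_i$ supported on the primes above $p_i$. It relies on the quoted facts that $G$ permutes the primes above each $p_i$ and acts transitively on them. For necessity it asserts $\tau(\mathcal{I}_i)=\mathcal{I}_i$ and compares the exponent of $\mathcal{Q}=\tau(\mathcal{P})$ on both sides. For sufficiency it uses transitivity to force all exponents in $\mathcal{I}_i$ to be equal, so that $\mathcal{I}_i=(\mathcal{P}_{i1}\cdots\mathcal{P}_{im_i})^{e}$ is visibly $G$-invariant.

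You instead isolate one transport identity, $\nu_{\tau(\mathcal{I})}(\tau(\mathcal{P}))=\nu_{\mathcal{I}}(\mathcal{P})$, and read off both directions prime by prime. This needs neither the grouping by rational primes nor transitivity. It also makes explicit two points the paper passes over quickly:
\begin{itemize}
\item why $\tau(\mathcal{I})\subseteq\mathcal{I}$ for all $\tau$ upgrades to equality (your $\tau^{-1}$ argument; the paper's step $\tau(\mathcal{I}_i)=\mathcal{I}_i$ silently uses $\tau(\mathcal{I})=\mathcal{I}$ plus unique factorization);
\item why negative exponents cause no trouble.
\end{itemize}

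Consequently your argument proves more. It works verbatim for any subgroup $H\le\operatorname{Aut}(\mathbb{K})$, with conjugacy taken relative to $H$, whether or not $\mathbb{K}/\mathbb{Q}$ is Galois. What the paper's route buys in return is a concrete description of the $G$-stable ideals of a Galois field. They are the products over rational primes $p$ of $(\mathcal{P}_1\cdots\mathcal{P}_g)^{e_p}$, where $\mathcal{P}_1,\ldots,\mathcal{P}_g$ are the primes above $p$. That description genuinely needs transitivity and does not follow from your identity alone.

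Your aside about finiteness is slightly off: no nonzero ideal is a finite set, integral or not, and an index or norm comparison would work for fractional ideals too. The $\tau^{-1}$ argument is still the cleanest choice.
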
 
\begin{proof} Consider the factorization in prime ideals of $\mathcal{O}_\mathbb{K}$ of $\mathcal{I} \in \mathcal{O}_\mathbb{K}$ given by $\mathcal{I}=\prod_{i=1}^n \prod_{j=1}^{m_i} \mathcal{P}_{ij}^{e_{ij}}$, where $e_{ij}\in\mathbb{Z}$ and $\mathcal{P}_{i1},\ldots,\mathcal{P}_{im_i}$ denote all the prime ideals above the same prime number $p_i\in\mathbb{Z}$ in $\mathbb{K}$, for each $i=1,\ldots,n$. For each $i=1,\ldots,n$, denote $\mathcal{I}_i=\prod_{j=1}^{m_i} \mathcal{P}_{ij}^{e_{ij}}$ - so $\mathcal{I}=\mathcal{I}_1\ldots \mathcal{I}_n$. On one side, we assume that $\mathcal{I}$ is $G$-stable. Let $\mathcal{P}=\mathcal{P}_{iu}$ and $\mathcal{Q}=\mathcal{P}_{iv}$ be any conjugate pair of prime ideals above the same prime $p_i$. Denote $e=e_{iu}$ and $f=e_{iv}$. Notice that $\nu_\mathcal{I}(\mathcal{P})= e$ and $\nu_\mathcal{I}(\mathcal{Q}) = f$. By definition, there exists $\tau\in G=\operatorname{Gal}(\mathbb{K}/\mathbb{Q})$ such that $\tau(\mathcal{P})=\mathcal{Q}$. Since $\tau$ only permutes the ideals above the same prime number, then $\tau(\mathcal{I}_i)=\mathcal{I}_i$. So, when we apply $\tau$ to $\mathcal{I}_i$, we have 
$$\mathcal{P}^{e}\mathcal{Q}^{f}\prod_{\substack{j=1\\j\not\in\{u,v\}}}^{m_i} \mathcal{P}_{ij}^{e_{ij}} = \mathcal{I}_i =\tau(\mathcal{I}_i) = \underbrace{\tau(\mathcal{P}^{e})}_{\mathcal{Q}^{e}}\tau(\mathcal{Q}^{f})\prod_{\substack{j=1\\j\not\in\{u,v\}}}^{m_i} \tau(\mathcal{P}_{ij}^{e_{ij}})$$
implying that $e=f$ due to the unicity of the decomposition of $\mathcal{I}_i$. This means $\nu_\mathcal{I}(\mathcal{P})=\nu_\mathcal{I}(\mathcal{Q})$. On the other hand, suppose that whenever $\mathcal{P}_{iu}$ and $\mathcal{P}_{iv}$ are conjugate for some $i=1,\ldots,n$, then $\nu_\mathcal{I}(\mathcal{P}_{iu})=\nu_\mathcal{I}(\mathcal{P}_{iv})$. As it is valid for any prime ideal in the factorization of $\mathcal{I}_i$, we have that all the exponents are equal, i.e., $\mathcal{I}_i=\mathcal{P}_{i1}^{e}\mathcal{P}_{i2}^{e}\ldots\mathcal{P}_{im_i}^{e}$. Applying any $\tau \in G$, we obtain $$\tau(\mathcal{I}_i) = \tau(\mathcal{P}_{i1}^{e}\mathcal{P}_{i2}^{e}\ldots\mathcal{P}_{im_i}^{e}) =  \tau(\mathcal{P}_{i1})^e\tau(\mathcal{P}_{i2})^{e}\ldots\tau(\mathcal{P}_{im_i})^{e}=\mathcal{I}_i$$ where the last equality follows because $S=\{\mathcal{P}_{i1},\mathcal{P}_{i2},\ldots,\mathcal{P}_{im_i}\}$ is $G$-stable. Thus, $\mathcal{I}_i$ is $G$-stable. Repeating this for all $i$ shows that $\mathcal{I}$ is $G$-stable. \end{proof}

The theorem above can be restated as follows: $\mathcal{I}$ is $G$-stable if and only if the prime ideals dividing $\mathcal{I}$ in $\mathbb{K}$ that are conjugate to one other occur with the same exponent in the factorization of $\mathcal{I}$ in $\mathbb{K}$. Theorem 1.4 is an immediate consequence of this result, yielding a characterization of the ideals in a cyclic number field that yield cyclic lattices under the Minkowski embedding.

\section*{Acknowledgements}
The authors thank Dr. Lenny Fukshansky and Dr. Trajano P. Nóbrega Neto for their valuable comments and suggestions. This work was supported by Conselho Nacional de Desenvolvimento Científico e Tecnológico (CNPq) [grant number 405842/2023-5] and Fundação de Amparo à Pesquisa do Estado de São Paulo (FAPESP) [grant numbers 2022/02303-0, 2025/01467-7]. This study was financed in part by the Coordenação de Aperfeiçoamento de Pessoal de Nível Superior - Brasil (CAPES) - Finance Code 001.

\end{document}